\newtheorem{thm}{Theorem}
\newtheorem{proposition}[thm]{Proposition}
\newtheorem{remark}{Remark}
\newtheorem{definition}{Definition}
\newtheorem{assumption}{Assumption}
\newtheorem{question}{Question}
\title{On some open questions in bilinear quantum control
}
\author{Ugo Boscain$^{1}$, Thomas Chambrion$^2$, and Mario Sigalotti$^3$
\thanks{$^{*}$ This research has been supported by the European Research Council, ERC
StG 2009 ``GeCoMethods'', contract number 239748, by the ANR project
GCM, program ``Blanche'',
project number NT09-504490}%
\thanks{$^1$ Ugo Boscain is with 
Centre National de Recherche Scientifique (CNRS), CMAP, \'Ecole Polytechnique, Route de Saclay, 91128 Palaiseau Cedex, France, and Team GECO, INRIA-Centre de Recherche Saclay
{\tt \small ugo.boscain@polytecnique.edu}}%
\thanks{$^2$ Thomas Chambrion is with
Universit\'e de Lorraine, Institut \'Elie Cartan de Nancy, UMR 7502, Vand{\oe}uvre-l\`es-Nancy, F-54506, France, and
Inria, Villers-l\`es-Nancy, F-54600, France
{\tt \small Thomas.Chambrion@univ-lorraine.fr}}%
\thanks{$^3$ Mario Sigalotti is with
INRIA-Centre de Recherche Saclay, Team GECO and 
CMAP, \'Ecole Polytechnique, Route de Saclay, 91128 Palaiseau Cedex, France
{\tt \small mario.sigalotti@inria.fr}}%
}
\begin{document}

\maketitle
\thispagestyle{empty}
\pagestyle{empty}

\begin{abstract}
The aim of this paper is to provide a short
introduction to modern issues in the control
of infinite dimensional
closed quantum systems, driven by the bilinear Schr\"{o}dinger equation.

The first part is a quick presentation of some of the numerous
recent developments in the fields. This short summary  is intended to demonstrate
the variety of tools and approaches used by various teams in
the last decade. In a second part, we present four examples
of bilinear closed quantum systems. These examples were
extensively studied and may be used as a convenient and
efficient test bench for new conjectures. Finally, we list some open
questions, both of theoretical and practical interest.
\end{abstract}

\section{INTRODUCTION}

\subsection{Control of quantum systems}\label{SEC_physical_context}

The state of a quantum system (e.g. a charged particle) evolving on a Riemannian manifold 
$\Omega$ is described by its wave function 
$\psi$, an element of $L^2(\Omega,\mathbf{C})$. When the system is submitted to an external 
field (e.g. an electric field), 
the time evolution of the wave function is given by the Schr\"{o}dinger equation
$$
\mathrm{i}\frac{\partial \psi}{\partial t}=(-\Delta +V(x))\psi +u W(x) \psi(t), \quad x\in \Omega
$$
where $\Delta$ is the Laplace-Beltrami operator on $\Omega$, $V$ is a potential describing the 
system in absence of control, $u$ is the scalar (time variable) intensity of the external 
field and $W:\Omega\to \mathbf{R}$ is a potential accounting for the properties of the external field.

A natural question, with many practical applications, is to determine how to build 
(if it is possible) a control $u$ that steers the wave function $\psi$ from a given source to a given target.
 
\subsection{Framework and  notations}


We set the problem in a  more abstract framework. In a separable Hilbert space $H$, 
endowed with the Hermitian product $\langle,\rangle$, we consider the following control system 
\begin{equation}\label{EQ_main}
\frac{d}{dt} \psi=(A+u(t) B) \psi,
\end{equation}
where $(A,B)$ satisfies Assumption \ref{ASS_1}.

\begin{assumption}\label{ASS_1}
$(A,B)$ is a pair of (possibly unbounded) linear operators in $H$ such that
\begin{enumerate}
\item $A$ is skew-adjoint on its domain $D(A)$;
\item there exists a Hilbert basis $(\phi_k)_{k\in \mathbf{N}}$ of $H$ made of eigenvectors 
of $A$: for every $k$, $A \phi_k =\mathrm{i} \lambda_k \phi_k$ with $\lambda_k$ in $\mathbf{R}$ and $\lambda_k$ tends to $-\infty$ as $k$ tends to $\infty$ ;
\item for every $j$ in $\mathbf{N}$, $\phi_j$ belongs to  $D(B)$, the domain of $B$;
\item there exists $U \subset \mathbf{R}$ containing at least $0$ and $1$ such that $A+u B $ 
is essentially skew-adjoint (not necessarily with domain $D(A)$) for every $u$ in $U$;
\item $\langle B \phi_j,\phi_k\rangle=0$ for every $j,k$ in $\mathbf{N}$ such that 
$\lambda_j=\lambda_k$ and $j\neq k$. 
\end{enumerate} 
\end{assumption}

If $(A,B)$ satisfies Assumption \ref{ASS_1}, for every $u$ in $U$, $A+uB$ generates a unitary 
group of propagators $t\mapsto e^{t(A+uB)}$. By concatenation, one can define the solution of 
(\ref{EQ_main}) for every piecewise constant functions $u$ taking value in $U$, for every 
initial condition $\psi_0$ given at time $t_0$. We denote this solution 
$t\mapsto \Upsilon^u_{t,t_0} \psi_0$. To the best of our knowledge,  it is not possible to 
define the propagator $\Upsilon^u$ for controls $u$ that are not piecewise constant in 
the general framework of Assumption \ref{ASS_1}. With some extra regularity assumptions, 
it is  possible to extend the definition of $\Upsilon^u$ to more general controls. For instance, 
if $B$ is bounded, $\Upsilon$ admits a continuous extension to the set $L^1(\mathbf{R},\mathbf{R})$
(see \cite[Proposition 1.1]{MR2491595}).

The framework of Assumption \ref{ASS_1} is, in one sense, too  general for the purpose of 
quantum mechanics. For instance, it includes the example of Section \ref{SEC_physical_context} 
with $H=L^2(\Omega,\mathbf{C})$ and $V$ any $L^{\infty}$ function.  Following Cohen-Tannoudji et 
al., \cite[Figure 7a, page 35 and Section II-A-1, page 94]{Cohen}, one of the most physically 
relevant cases is precisely the one where the potentials $V$ and $W$ and the wave functions are smooth. 
\begin{quote}
 ``From a physical point of view, it is clear that the set $L^2(\Omega,\mathbf{C})$ is too wide in scope:
given the meaning attributed to $|\psi(x, t)|^2$, the wave functions which are actually
used possess certain properties of regularity. We can only retain the functions $\psi(x, t)$ 
which are everywhere defined, continuous, and infinitely differentiable''
\end{quote}
This is the main motivation for the notion of weak-coupling (see \cite{weakly-coupled}).
\begin{definition}\label{DEF_weak_coupling}
Let $k>0$.
A pair $(A,B)$ satisfying Assumption \ref{ASS_1} is $k$-weakly-coupled  if
\begin{enumerate}
\item for every $u$ in $\mathbf{R}$, $A+uB$ is skew-adjoint with domain $D(A)$;
\item  for every $u\in {\mathbf R}$, $D(|A+ u B|^{k/2})=D(|A|^{k/2})$;
\item there exists $d\geq 0$ and $r<k$ such that $\|B\psi\|\leq d\||A|^{r/2}\psi\|$ for every $\psi$ in $D(|A|^{k/2})$;   
\item  there exists
a constant $C$ such that,  for every $\psi$ in $D(|A|^k)$, $ |\Re \langle |A|^k
\psi,B\psi \rangle |\leq C |\langle |A|^k \psi, \psi \rangle|$.
\end{enumerate}
\end{definition}

If $(A,B)$ is $k$-weakly-coupled, the \emph{coupling constant}  $c_k(A,B)$ of system $(A,B)$ of
order $k$ is the quantity
$$
\sup_{\psi\in D(|A|^{k})|A\psi \neq 0}  \frac{ |\Re
\langle |A|^k
\psi,B\psi \rangle |}{|\langle |A|^k \psi, \psi \rangle|}.
$$

We denote by $PC(U)$ the set of piecewise constant functions $u$ such that there exists two 
sequences $0=t_1 <t_2 <\ldots < t_{p+1}$ and $u_1,u_2,\ldots,u_p$ in $U\setminus\{0\}$ with
$$
u=\sum_{j=1}^p u_j \mathbf{1}_{[t_j,t_{j+1})}.
$$

The operators $A$ and $B$  can be represented by infinite dimensional matrices in the 
basis $(\phi_k)_{k \in \mathbf{N}}$. For every $j,k$,  we denote 
$b_{jk}=\langle \phi_j, B \phi_k \rangle$. For every $N$, the orthogonal projection 
$\pi_N:H\rightarrow H$ on the space spanned by the first  $N$ eigenvectors of $A$ is defined by
$$
\pi_N(x)=\sum_{k=1}^N \langle \phi_k,x\rangle \phi_k \quad \quad \mbox{for every } x \mbox{ in } H.
$$
Let $\mathcal{L}_N$ be the range of $\pi_N$. 
The \emph{compressions} of $A$ and $B$ at order $N$ are the finite rank operators 
$A^{(N)}=\pi_N A_{| \mathcal{L}_N}$ and $B^{(N)}=\pi_N B_{| \mathcal{L}_N}$  respectively. 
The \emph{Galerkin approximation}  of (\ref{EQ_main})
of order $N$ is the system in $\mathcal{L}_N$
\begin{equation}\label{eq:sigma}
\dot x = (A^{(N)} + u B^{(N)}) x,
\end{equation}
whose propagator is denoted with $X^u_{(N)}$.

A pair $(j,k)$ in $\mathbf{N}^2$ is a \emph{non-degenerate} (also called \emph{non-resonant}) transition of $(A,B)$ if $b_{jk}\neq 0$  and,  for every $l,m$,  
$|\lambda_j-\lambda_k|=|\lambda_l-\lambda_m|$ implies $\{j,k\}=\{l,m\}$ or $\{l,m\}\cap\{j,k\}=\emptyset$.

A subset $S$ of $\mathbf{N}^2$ is a \emph{chain of connectedness} of $(A,B)$ if  for 
every $j,k$ in $\mathbf{N}$, there exists a finite sequence $p_1=j,p_2,\ldots,p_r=k$ for which 
$(p_l,p_{l+1})\in S$ and $\langle \phi_{p_{l+1}}, B \phi_{p_l} \rangle \neq 0$ for every $l=1,\ldots,r-1$. A chain 
of connectedness $S$ of $(A,B)$ is \emph{non-degenerate} if every element of $S$ is a 
non-degenerate tran\-sition of $(A,B)$.

\subsection{Content of the paper}
Sections \ref{SEC_exact_control} and \ref{SEC_approx_control} present a short review of results a\-vaila\-ble in 
the literature about exact and approximate controllability of infinite dimensional bilinear quantum systems.  
Section \ref{SEC_example} collects four examples of bilinear quantum systems that were extensively studied in the last decade. Finally, we suggest 
five questions in Section  \ref{SEC_questions} 
that we think both important and natural.

\section{EXACT CONTROLLABILITY}\label{SEC_exact_control}

\subsection{Obstructions to exact controllability}\label{SEC_exact_negative}

The first result about bilinear control is a general negative result due to Ball, Marsden 
and Slemrod \cite{BMS}. It was adapted to the case of bilinear quantum systems by Turinici 
\cite{turinici} in the following form:
\begin{proposition}[\cite{turinici}]\label{PRO_BMS}
Let $(A,B)$ satisfy Assumption \ref{ASS_1} and $B$ be bounded. Then, for every $r>1$, 
for every $\psi_0$ in $D(A)$, the attainable set from $\psi_0$ with controls in $L^r$, 
$\{\Upsilon^u_{t,0}\psi_0| u\in L^{r}(\mathbf{R},\mathbf{R})\}$  is a countable union 
of closed sets with  empty interior  in $D(A)$. In particular, this attainable set has  
empty interior  in $D(A)$.
\end{proposition}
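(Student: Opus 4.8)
The plan is to adapt the Ball--Marsden--Slemrod mechanism: I would exhibit the attainable set as a countable union of compact subsets of $D(A)$ (endowed with the graph norm, which makes it a Banach space) and then invoke the Baire category theorem, using that in an infinite-dimensional Banach space every compact set has empty interior. First I would write the attainable set as
\[
\{\Upsilon^u_{t,0}\psi_0 \mid t\geq 0,\ u\in L^r(\mathbf{R},\mathbf{R})\}=\bigcup_{n,T\in\mathbf{N}}\mathcal{A}_{n,T},
\]
where $\mathcal{A}_{n,T}=\{\Upsilon^u_{t,0}\psi_0\mid 0\leq t\leq T,\ \|u\|_{L^r([0,t])}\leq n\}$. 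Since this is a countable union, it is enough to prove that each $\mathcal{A}_{n,T}$ is relatively compact in $D(A)$; its closure then furnishes one of the closed, empty-interior sets appearing in the statement.

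To produce the compactness I would first record that, $B$ being bounded, $A+uB$ is skew-adjoint with domain $D(A)$, generates a unitary group, and the propagator obeys the Duhamel identity
\[
\Upsilon^u_{t,0}\psi_0=e^{tA}\psi_0+\int_0^t e^{(t-s)A}\,u(s)\,B\,\Upsilon^u_{s,0}\psi_0\,ds .
\]
The heart of the argument is then a \emph{weak-to-strong continuity} lemma: if $u_n\rightharpoonup u$ weakly in $L^r$, then $\Upsilon^{u_n}_{\cdot,0}\psi_0\to\Upsilon^{u}_{\cdot,0}\psi_0$ strongly in $C([0,T];D(A))$. Setting $v_n=u_n-u$ and subtracting the Duhamel identities, one term is absorbed by a Gr\"onwall estimate (using $\|v_n\|_{L^1}\leq T^{1-1/r}\|v_n\|_{L^r}$, which stays bounded), while the genuinely new object is the source term $R_n(t)=e^{tA}\int_0^t v_n(s)F(s)\,ds$ with $F(s)=e^{-sA}B\,\Upsilon^{u}_{s,0}\psi_0$. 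Approximating $F$ uniformly by $D(A)$-valued step functions reduces matters to the primitives $\tau\mapsto\int_0^\tau v_n$, which are uniformly bounded and equicontinuous (H\"older, using $r>1$) and converge pointwise to $0$, hence uniformly on $[0,T]$ by Arzel\`a--Ascoli; this yields $\sup_t\|R_n(t)\|_{D(A)}\to 0$. Because $r>1$ makes $L^r$ reflexive, the ball $\{\|u\|_{L^r}\leq n\}$ is weakly sequentially compact, so $\mathcal{A}_{n,T}$ is the continuous image of the compact set $[0,T]\times\{\|u\|_{L^r}\leq n\}$, whence compact.

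The step I expect to be the main obstacle is carrying this compactness in the \emph{graph norm of $D(A)$}, and not merely in $H$. Working in $H$ alone is insufficient here: since $\lambda_k\to-\infty$, the embedding $D(A)\hookrightarrow H$ is compact, so a graph-norm ball is already relatively compact in $H$ and $H$-compactness cannot exclude it. The analysis above goes through verbatim in $D(A)$ provided $F$ takes values in $D(A)$, i.e.\ provided $B$ maps $D(A)$ boundedly into itself (equivalently, $B$ is bounded for the graph norm); this regularity of $B$ relative to $A$ is exactly what is needed for $e^{tA}$ to act as a graph-norm isometry and for the Gr\"onwall and Arzel\`a--Ascoli steps to close in $D(A)$. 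I would isolate this as a lemma and verify it in the cases of interest, e.g.\ multiplication by a smooth potential, where $D(A)$ is a Sobolev space. Finally I would conclude by Baire category: each $\overline{\mathcal{A}_{n,T}}$ is compact, hence closed with empty interior in the infinite-dimensional Banach space $D(A)$, so their countable union is meager, and a meager subset of a complete metric space has empty interior. Thus the attainable set, which is exactly this countable union of compact (hence closed, empty-interior) sets, has empty interior in $D(A)$.
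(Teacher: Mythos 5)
Your reconstruction follows exactly the Ball--Marsden--Slemrod mechanism that the paper invokes by citation (the paper gives no proof of Proposition~\ref{PRO_BMS}; it points to \cite{turinici}): decompose the attainable set over $L^r$-balls and time horizons, prove that weak $L^r$ convergence of controls yields strong convergence of trajectories, use reflexivity of $L^r$ ($r>1$) to get compact pieces, and finish with Baire. The analytic details you give (Duhamel splitting, Gr\"onwall absorption of the $u_n B(\psi_n-\psi)$ term, Arzel\`a--Ascoli applied to the primitives $\tau\mapsto\int_0^\tau v_n$) are the right ones, and your observation that $H$-compactness cannot suffice here --- because $\lambda_k\to-\infty$ forces the embedding $D(A)\hookrightarrow H$ to be compact, so closed graph-norm balls are themselves $H$-compact --- is correct and is precisely why the argument must be run in the graph norm.

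However, the step you flag as ``the main obstacle'' is a genuine gap, not a lemma that can be postponed: the stated hypotheses ($(A,B)$ satisfying Assumption~\ref{ASS_1} and $B$ bounded on $H$) do not imply that $B$ maps $D(A)$ boundedly into itself. For instance, with $\lambda_k=-k^2$ take $\psi_\ast=\sum_k k^{-1}\phi_k$, which lies in $H$ but not in $D(A)$, and let $B$ be the bounded skew-adjoint rank-two operator $x\mapsto \mathrm{i}\langle \psi_\ast,x\rangle\phi_1+\mathrm{i}\langle \phi_1,x\rangle\psi_\ast$: all items of Assumption~\ref{ASS_1} hold, yet $B\phi_1\notin D(A)$. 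For such a $B$ your key lemma fails at the outset: $F(s)=e^{-sA}B\,\Upsilon^u_{s,0}\psi_0$ does not take values in $D(A)$, the propagator need not preserve $D(A)$ for non-piecewise-constant $L^r$ controls, and even the inclusion of the attainable set in $D(A)$ becomes unclear. So what you have actually proved is the proposition under the additional hypothesis $B\in\mathcal{L}(D(A))$. This is, in fact, what the cited proof does: Turinici applies the Ball--Marsden--Slemrod theorem with state space $X=D(A)$, which requires $B$ to be bounded on $X$, and in his concrete Schr\"odinger setting ($B$ = multiplication by a sufficiently regular potential) this holds automatically; the survey's statement is looser than the result it cites. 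A final remark on the statement itself: since the propagators are unitary, the attainable set lies in a sphere, so ``empty interior in $D(A)$'' is literally trivial; the real content --- which your compactness argument correctly targets, and which is exactly what requires the extra hypothesis on $B$ --- is that the attainable set is a countable union of $D(A)$-compact sets, hence has dense complement in the unit sphere intersected with $D(A)$.
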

Proposition \ref{PRO_BMS} admits a natural extension in the case of weakly-coupled systems:
\begin{proposition}[\mbox{\cite[Proposition 2]{weakly-coupled}}]\label{PRO_obstruction_weak_coupling}
Let $(A,B)$ be $k$ weakly-coupled and $B$ be bounded.  Then, for every $\psi_0$ in $D(|A|^{k/2})$, 
for every $u$ in $L^1(\mathbf{R}, \mathbf{R})$, for every $t\geq 0$, $\Upsilon^u_{t,0}\psi_0$ 
belongs to $D(|A|^{k/2})$. In particular,   $\{\Upsilon^u_{t,0}\psi_0| u\in L^{1}(\mathbf{R},\mathbf{R})\}$ 
has empty interior in $D(|A|^{r/2})$ for every $r<k$.
\end{proposition}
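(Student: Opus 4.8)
The plan is to control the growth of the ``energy'' $\||A|^{k/2}\psi(t)\|$ along the flow by a quantity depending only on the $L^1$-norm of the control, and then to read off both assertions from this bound. The delicate qualitative statement that $\Upsilon^u_{t,0}\psi_0$ stays in $D(|A|^{k/2})$ will come from the quantitative estimate by a limiting argument, while the empty-interior claim will follow from an elementary fact about proper linear subspaces.

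First I would prove the estimate for a single constant control $u_0$ and a regular datum $\psi_0\in D(|A|^k)$. Setting $\psi(t)=e^{t(A+u_0B)}\psi_0$ and differentiating the nonnegative quantity $\langle|A|^k\psi(t),\psi(t)\rangle=\||A|^{k/2}\psi(t)\|^2$, using $\dot\psi=(A+u_0B)\psi$, self-adjointness of $|A|^k$, and the fact that $A$ is skew-adjoint and commutes with $|A|^k$ (so that $\Re\langle|A|^k\psi,A\psi\rangle=0$), one obtains
$$\frac{d}{dt}\langle|A|^k\psi,\psi\rangle=2u_0\,\Re\langle|A|^k\psi,B\psi\rangle.$$
By point 4 of Definition \ref{DEF_weak_coupling} and the definition of the coupling constant, the right-hand side is bounded by $2|u_0|\,c_k(A,B)\,\langle|A|^k\psi,\psi\rangle$, so Gr\"onwall's lemma gives $\||A|^{k/2}\psi(t)\|\le e^{|u_0|c_k(A,B)t}\||A|^{k/2}\psi_0\|$. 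Concatenating over the intervals on which a control $u\in PC(U)$ is constant, the exponents add up to $\int|u|$, yielding the switching-independent bound
$$\||A|^{k/2}\Upsilon^u_{t,0}\psi_0\|\le e^{c_k(A,B)\|u\|_{L^1}}\||A|^{k/2}\psi_0\|.$$

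To upgrade this to arbitrary $u\in L^1(\mathbf{R},\mathbf{R})$, I would approximate $u$ by piecewise constant controls $u_n\to u$ in $L^1$; since $B$ is bounded, $\Upsilon^{u_n}_{t,0}\psi_0\to\Upsilon^u_{t,0}\psi_0$ in $H$, while the bounds above stay uniformly bounded because $\|u_n\|_{L^1}\to\|u\|_{L^1}$. As $|A|^{k/2}$ is a closed operator, a weak-compactness argument (extract a weakly convergent subsequence of $|A|^{k/2}\Upsilon^{u_n}_{t,0}\psi_0$ and use that the graph of a closed operator is weakly closed) shows $\Upsilon^u_{t,0}\psi_0\in D(|A|^{k/2})$, the bound persisting by weak lower semicontinuity of the norm; density of $D(|A|^k)$ in $D(|A|^{k/2})$ then removes the regularity assumption on $\psi_0$. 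I expect this passage to the $L^1$-limit, together with the justification of the differentiation step on the right domain, to be the main obstacle: the formal computation is licit only while $\psi(t)$ remains in $D(|A|^k)$, so one must first secure invariance of that domain under the regularized flow (relying on point 2 of Definition \ref{DEF_weak_coupling}), and it is essential that the constant in the exponential depend on $\|u\|_{L^1}$ alone and not on the number of switchings.

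Finally, the whole attainable set is contained in $D(|A|^{k/2})$, which is a linear subspace of the Banach space $D(|A|^{r/2})$ endowed with its graph norm, and is proper since $A$ is unbounded. Any proper linear subspace has empty interior: a ball contained in it would, by the translation and scaling invariance of a subspace, force the subspace to be the whole space. Hence the attainable set has empty interior in $D(|A|^{r/2})$ for every $r<k$.
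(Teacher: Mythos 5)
Your proposal is correct and follows essentially the same route as the paper's (cited) proof: the Gr\"onwall-plus-concatenation estimate $\||A|^{k/2}\Upsilon^u_{t,0}\psi_0\|\leq e^{c_k(A,B)\|u\|_{L^1}}\||A|^{k/2}\psi_0\|$ that you derive is precisely Proposition~\ref{PRO_croissance_norme_A} (quoted from the same reference), and the remaining steps --- extension to $L^1$ controls using the boundedness of $B$ and the continuity of $u\mapsto\Upsilon^u$, then the observation that the attainable set lies in the proper subspace $D(|A|^{k/2})$ of $D(|A|^{r/2})$, which therefore has empty interior --- are exactly how the cited argument concludes. Your flagged technical points (domain invariance justifying the differentiation, and the bound depending only on $\|u\|_{L^1}$) are indeed the delicate parts of that reference's proof, and you address them in the same spirit.
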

Most of the bilinear quantum systems encountered in the literature are $k$ weakly-coupled for 
every $k>0$. Notice that the eigenvectors of $A$ are in $D(|A|^k)$ for every $k$. As a 
consequence, the attainable set for such a system from any eigenvector of $A$ is contained 
in $\cap_{k>0} D(|A|^k)$, the intersection of all the iterated domains of $A$. 

\subsection{Attainable set of the infinite square potential well}\label{b-c}

The results of Section \ref{SEC_exact_negative} do not exclude exact controllability on a 
sufficiently small subset of $H$. In a series of paper (\cite{beauchard,camillo}), Beauchard 
\emph{et al.} determined the attainable set for the infinite square potential well.
\begin{thm}\label{THM_beauchard}
Consider the bilinear Schr\"{o}dinger equation
$$ \mathrm{i}\frac{\partial \psi}{\partial t}=-\Delta\psi +u x \psi(t), \quad x\in  (0,1).$$
The attainable set with $L^2$ controls from the first eigenstate of the Laplacian is 
exactly the intersection of the unit sphere of $L^2((0,1),\mathbf{C})$ with 
$H^3_{(0)}=\{\psi \in H^3((0,1),\mathbf{C})\mid \psi(0)=\psi(1)=\psi''(0)=\psi''(1)=0\}$.
 \end{thm}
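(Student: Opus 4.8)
The plan is to prove the two inclusions separately: regularity of the attainable set (the attainable set is contained in $H^3_{(0)}$) and exact controllability toward every target in $H^3_{(0)}$ on the sphere. The operators here are $A=\mathrm{i}\Delta$ with Dirichlet boundary conditions and $B\psi=-\mathrm{i}x\psi$, whose eigenvalues are $\lambda_k=-k^2\pi^2$ and eigenfunctions $\phi_k(x)=\sqrt{2}\sin(k\pi x)$.

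First I would establish the necessary ``regularity'' inclusion, showing that solutions starting from $\phi_1$ remain in $H^3_{(0)}$ for $L^2$ controls. The natural functional-analytic setting is to work with $D(|A|^{3/2})$, which, after computing the boundary conditions imposed by iterated domains of the Dirichlet Laplacian, coincides precisely with $H^3_{(0)}$ (the conditions $\psi=\psi''=0$ at the endpoints are exactly what survives). The key point is an a priori estimate: one must show that the bilinear flow preserves this space and that the bound stays finite for controls merely in $L^2$ rather than smoother. This is where a weak-coupling type argument, in the spirit of Proposition~\ref{PRO_obstruction_weak_coupling} but at the sharper $H^3$ level, controls the growth of $\||A|^{3/2}\psi\|$ along trajectories. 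The boundary conditions $\psi(0)=\psi(1)=\psi''(0)=\psi''(1)=0$ are forced because the multiplication operator $x$ does not destroy the Dirichlet conditions but the commutator structure with $\Delta$ pins down the second-derivative conditions.

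The harder direction is exact controllability: given any target $\psi_f$ on the unit sphere of $H^3_{(0)}$, I would construct a control steering $\phi_1$ to $\psi_f$. The strategy I expect to work is a local-then-global argument. First prove \emph{local} exact controllability around $\phi_1$ (and more generally around eigenstates) by linearizing equation~(\ref{EQ_main}) about the free trajectory $t\mapsto e^{t A}\phi_1$; the linearized control system is a moment problem whose solvability rests on showing the family of functions $t\mapsto e^{\mathrm{i}(\lambda_k-\lambda_1)t}$, weighted by the coupling coefficients $b_{1k}=\langle\phi_1,x\phi_k\rangle$, forms a Riesz basis on an appropriate interval. Here the crucial number-theoretic input is the gap condition: the eigenvalue differences $\lambda_k-\lambda_1=(1-k^2)\pi^2$ are sufficiently separated (quadratic growth gives uniformly large gaps) that Ingham-type inequalities apply, and the decay rate of $b_{1k}$ (of order $1/k^2$ off-diagonal) matches exactly the $H^3$ regularity budget. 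The inverse mapping theorem then yields local surjectivity onto a neighborhood in $H^3_{(0)}\cap\{\|\psi\|=1\}$.

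The main obstacle, and the step I expect to be most delicate, is upgrading local controllability to \emph{global} controllability on the whole intersection of the sphere with $H^3_{(0)}$. The plan is to exploit the specific algebraic structure of this system: the free dynamics $e^{tA}$ rotates phases, and one shows that any two states on the sphere in $H^3_{(0)}$ can be connected by chaining local moves, possibly after first reaching a neighborhood of a suitable eigenstate. A compactness or connectedness argument on the sphere, combined with the local result applied around a dense set of eigenstates, closes the gap; time-reversibility of the Schr\"odinger flow (the adjoint control steers targets back to sources) lets one glue a trajectory from $\phi_1$ to an intermediate state with one from $\psi_f$ run backwards. Verifying that the moment-problem / Riesz-basis estimates are uniform enough to make this gluing quantitative, and that no regularity is lost in the concatenation, is the technical heart of the proof.
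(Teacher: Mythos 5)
Your first two steps do follow the route of the works this paper cites for Theorem~\ref{THM_beauchard} (Beauchard \cite{beauchard}, Beauchard--Laurent \cite{camillo}): the identification $H^3_{(0)}=D(|A|^{3/2})$, and local exact controllability around the ground state via linearization, a moment problem, and Ingham-type estimates, closed by the inverse mapping theorem. Two corrections there. First, $b_{1k}=\langle\phi_1,x\phi_k\rangle$ decays like $1/k^3$, not $1/k^2$; it is precisely the $1/k^3$ decay that matches the $H^3$ budget. Second, and more substantively, the inclusion of the attainable set in $H^3_{(0)}$ is \emph{not} a weak-coupling a priori estimate ``at the $H^3$ level'': multiplication by $x$ does not preserve the boundary conditions $\psi''(0)=\psi''(1)=0$ (one has $(x\psi)''=x\psi''+2\psi'$, which does not vanish at the endpoints), so $B$ does not map $H^3_{(0)}$ into itself, and indeed the paper points out that this system is $2$-weakly-coupled and cannot be $k$-weakly-coupled for $k>3$. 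The actual mechanism in \cite{camillo} is a hidden smoothing effect of the Duhamel term for $L^2$ controls (a trigonometric-moment estimate), not a Gr\"onwall-type bound in the spirit of Proposition~\ref{PRO_obstruction_weak_coupling}.

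The genuine gap is your globalization step. You propose to chain local moves using ``the local result applied around a dense set of eigenstates'', but the eigenstates $(\phi_k)_k$ form a countable, discrete subset of the sphere --- they are nowhere near dense --- and local exact controllability is only established in $H^3$-neighborhoods of eigenstates (free trajectories), not around arbitrary points of the sphere in $H^3_{(0)}$. Hence a compactness/connectedness chaining argument cannot reach a generic target, and your scheme does not cover the whole attainable set claimed by the theorem. The missing ingredient, which the paper explicitly names as what ``allowed to obtain the global result'', is \emph{global approximate controllability in a norm strong enough to enter the local-controllability neighborhood} (i.e., approximate controllability in $H^3$), obtained by Nersesyan via Lyapunov techniques \cite{Nersesyan,beauchard-nersesyan}. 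Note that approximate controllability in the $L^2$ norm (the kind delivered by geometric techniques such as Proposition~\ref{pro_approx_contr_norme_H}) is not sufficient, because landing $L^2$-close to $\phi_1$ does not place you inside the $H^3$-neighborhood where the inverse mapping theorem applies. Once that strong-norm approximate controllability is available, the conclusion closes exactly by the time-reversal gluing you describe: steer the target $\psi_f$ approximately to $\phi_1$ in $H^3$, reach $\phi_1$ exactly by the local result, and run the concatenation backwards.
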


\section{APPROXIMATE CONTROLLABILITY}\label{SEC_approx_control}
\subsection{Lyapunov techniques}

Because of the specific features of quantum systems and, in particular, of the effects 
of the measurements on its evolution, classical closed-loop control strategies cannot be 
directly implemented in the Schr\"odinger framework. Nevertheless, the strategy consisting 
in identifying a Lyapunov function that measures the distance from the desired final state 
(or the distance from a trajectory that one wants to track) and that can be forced to 
decrease towards zero by a suitable state-dependent choice of the control parameter can be 
used to obtain, via simulation, open-loop control laws that approximately steer the system 
towards the prescribed goal. 
This approach has been explored in \cite{MR2168664} and refined in \cite{Mirrahimi}
for systems evolving in a finite-dimensional Hilbert space $H$. The proof of the convergence towards the goal 
adapts the classical Jurdjevic--Quinn method \cite{JurdjevicQuinn} and  
is based on the  LaSalle invariance principle.

In the case where $H$ is infinite-dimensional, generalizations of the previously mentioned results 
have been obtained by suitably adapting   LaSalle invariance principle
(see, in particular, 
\cite{Nersesyan}, \cite{MR2491595}, \cite{ito-kunisch}, \cite{beauchard-nersesyan}
for  the case where the drift operator of the 
 bilinear Schr\"odinger equation has discrete-spectrum).

\subsection{Geometric techniques: general case}

\begin{definition}
Let $(A,B)$ satisfy Assumption \ref{ASS_1}, $(j,k)$ be a pair of integers such that 
$\lambda_j\neq \lambda_k$ and $u^\ast:\mathbf{R}\to U$ be $T=2\pi/|\lambda_j-\lambda_k|$-periodic 
and not almost everywhere zero. The number 
$$\mathrm{Eff}_{(j,k)}(u^\ast)=
\frac{\left | \int_0^T\!\!u^{\ast}(\tau)e^{\mathrm{i}(\lambda_j-\lambda_k)\tau}\mathrm{d}\tau
 \right |}{ \int_0^T\!\! |u^{\ast}(\tau)|\mathrm{d}\tau} $$ 
is called the \emph{efficiency} of $u^\ast$ with respect to the transition $(j,k)$ of $(A,B)$. 
\end{definition}

\begin{proposition}[\mbox{\cite[Theorem 1]
{periodic}}]\label{PRO_periodic}
Let $(A,B)$ satisfy Assumption \ref{ASS_1} and $U$ be such that $ U/n \subset U$ for every $n\in \mathbf{N}$. 
Let $(1,2)$ 
be a non-degenerate transition of $(A,B)$. If  $u^\ast:\mathbf{R}\to U$ is 
$2\pi/|\lambda_1-\lambda_2|$-periodic with $\mathrm{Eff}_{(1,2)}(u^\ast)\neq 0$ and 
$\mathrm{Eff}_{(j,k)}(u^\ast) = 0$ for every $j,k$ such that $|\lambda_j-\lambda_k|\in \mathbf{N}|\lambda_2-\lambda_1|$  
and $\{j,k\}\neq \{1,2\}$, then there exists $T^\ast>0$ such that 
$|\langle \phi_2,\Upsilon^{u^\ast/n}_{nT^\ast,0}\phi_1 \rangle |$ tends to 1 as $n$ tends to infinity.
\end{proposition}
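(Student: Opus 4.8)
The plan is to prove Proposition \ref{PRO_periodic} by an averaging argument that reduces the infinite-dimensional dynamics to an effective two-level resonant system. First I would pass to the interaction picture (the rotating frame defined by the drift $A$): setting $\psi(t)=e^{tA}\varphi(t)$, the equation (\ref{EQ_main}) becomes $\dot\varphi = u(t)\, e^{-tA}B\,e^{tA}\varphi$, so that the matrix entries of the driving operator acquire oscillatory phases $e^{\mathrm{i}(\lambda_j-\lambda_k)t}$. With the scaled control $u^\ast/n$ and the rescaled time $t\mapsto nt$, the effective coupling between modes $j$ and $k$ over one period $T=2\pi/|\lambda_1-\lambda_2|$ is governed precisely by the integral appearing in the definition of $\mathrm{Eff}_{(j,k)}$. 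Intuitively, only those transitions $(j,k)$ whose Bohr frequency $|\lambda_j-\lambda_k|$ is resonant with the forcing, i.e.\ lies in $\mathbf{N}|\lambda_2-\lambda_1|$, can accumulate a nonvanishing contribution; the hypothesis $\mathrm{Eff}_{(j,k)}=0$ for all such resonant pairs other than $\{1,2\}$ kills every competing channel, leaving only the $(1,2)$ transition active.

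Second, I would make this averaging rigorous on each Galerkin truncation (\ref{eq:sigma}). On $\mathcal{L}_N$ the dynamics is finite-dimensional, so standard first-order averaging applies: as $n\to\infty$ the time-$nT^\ast$ propagator $X^{u^\ast/n}_{(N)}$ converges to the flow of an averaged system whose only nonzero off-diagonal entry (among the resonant frequencies) is the $(1,2)$ one, with magnitude proportional to $\mathrm{Eff}_{(1,2)}(u^\ast)\neq 0$. This averaged system is a rotation in the $\{\phi_1,\phi_2\}$ plane (the non-degeneracy of the transition $(1,2)$ ensures $|\lambda_1-\lambda_2|$ is not matched by any other pair within the block, so the two-level picture is clean), and by choosing the total time $T^\ast$ appropriately one drives a complete population transfer $\phi_1\mapsto\phi_2$ up to phase. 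Thus on each fixed level $N$ one gets $|\langle\phi_2, X^{u^\ast/n}_{(N), nT^\ast}\phi_1\rangle|\to 1$.

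The main obstacle, and the step requiring genuine care, is upgrading the finite-dimensional convergence to the true infinite-dimensional propagator $\Upsilon$: one must control the Galerkin error uniformly in $n$, since the time horizon $nT^\ast$ grows without bound even as the control amplitude $u^\ast/n$ shrinks. The natural tool here is a good $L^1$-estimate on the propagators together with the weak-coupling machinery (or, in the bounded-$B$ case, the $L^1$-continuous extension of $\Upsilon$ recalled after Assumption \ref{ASS_1}), giving an approximation bound of the form $\|\Upsilon^{u^\ast/n}_{nT^\ast,0}-X^{u^\ast/n}_{(N),nT^\ast}\pi_N\|\leq \varepsilon(N)$ that is independent of $n$. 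One would then fix $N$ large enough to make this tail error small and only afterwards let $n\to\infty$ to exploit the averaging on $\mathcal{L}_N$. The hypothesis $U/n\subset U$ is exactly what guarantees the scaled controls $u^\ast/n$ remain admissible, so that $\Upsilon^{u^\ast/n}$ is well defined throughout the limit.
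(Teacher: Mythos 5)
Your first two steps --- passing to the interaction picture, then first-order averaging on each Galerkin block, with the efficiency hypotheses killing every resonant channel except $(1,2)$ so that the averaged flow is a rotation in the $\{\phi_1,\phi_2\}$-plane --- are sound and match the opening of the argument in the cited reference \cite{periodic}. The gap is in your third step, which is precisely the hard one. Proposition \ref{PRO_periodic} assumes only Assumption \ref{ASS_1}: neither weak-coupling nor boundedness of $B$ is part of the hypotheses, so the two tools you invoke to obtain an $n$-uniform bound $\|\Upsilon^{u^\ast/n}_{nT^\ast,0}\psi-X^{u^\ast/n}_{(N)}(nT^\ast,0)\pi_N\psi\|\leq\varepsilon(N)$ --- namely Proposition \ref{prop:gga} and the $L^1$-continuous extension of $\Upsilon$ for bounded $B$ --- are both unavailable. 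Worse, such a uniform (``Good Galerkin Approximation'') estimate is not merely unproved but false at this level of generality: the example of Section \ref{SEC_Anharmonic} satisfies Assumption \ref{ASS_1} and admits no Good Galerkin Approximation, because controls of bounded $L^1$ norm (note that your scaled controls $u^\ast/n$ on $[0,nT^\ast]$ have $L^1$ norm constant in $n$) can drive the state to arbitrarily high energy levels, so no fixed truncation $\mathcal{L}_N$ captures the dynamics uniformly. Hence the scheme ``fix $N$ large, then let $n\to\infty$'' cannot be implemented by a norm estimate of the kind you propose.

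The missing idea is that no norm convergence is needed: the conclusion concerns a single matrix element, $|\langle\phi_2,\Upsilon^{u^\ast/n}_{nT^\ast,0}\phi_1\rangle|\to 1$, so weak (entrywise) convergence of the propagators suffices, and that can be extracted from Assumption \ref{ASS_1} alone. In the interaction frame the entries $y^n_j(t)=\langle\phi_j,e^{-tA}\Upsilon^{u^\ast/n}_{t,0}\phi_1\rangle$ satisfy $\dot y^n_j=\frac{u^\ast(t)}{n}\sum_l b_{jl}\,e^{\mathrm{i}(\lambda_j-\lambda_l)t}\,y^n_l$; unitarity gives $\sum_l |y^n_l(t)|^2=1$ for all $t$, and since $\phi_j\in D(B)$ and $B$ is skew-symmetric one has $\sum_l|b_{jl}|^2=\|B\phi_j\|^2<\infty$. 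Cauchy--Schwarz then shows that each entry is Lipschitz in the rescaled time $\tau=t/n$ with constant independent of $n$, and that the tail $\sum_{l>L}|b_{jl}|\,|y^n_l|$ is small uniformly in $n$; the finitely many retained terms are handled by the finite-dimensional averaging of your second step (integration by parts eliminates the non-resonant and the zero-efficiency resonant contributions). Finally, because the limiting value of $|y^n_2(nT^\ast)|$ is $1$, which saturates the unitary bound, entrywise convergence cannot be degraded by any ``escape of mass'': this is exactly where the argument exploits that the target is a modulus-one matrix element rather than convergence of the full state. This unitarity-plus-tail-control device is what replaces your uniform Galerkin estimate and is what makes the proposition true under Assumption \ref{ASS_1} alone.
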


\begin{proposition}\label{pro_approx_contr_norme_H}
Let $(A,B)$ satisfy Assumption \ref{ASS_1} and admit a non-degenerate chain of connectedness. 
Then, for every $\varepsilon>0$, for every unitary operator $\hat{\Upsilon}$ in $\mathbf{U}(H)$, 
for every $n$ in $\mathbf{N}$, there exists a piecewise constant function 
$u_\varepsilon:[0,T_\varepsilon]\to U$ such that $\|\Upsilon^u_{T_\varepsilon,0}\phi_j-\hat{\Upsilon}\phi_j\|<\varepsilon$ 
for $1\leq j \leq n$.  
\end{proposition}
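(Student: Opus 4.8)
The plan is to reduce the infinite-dimensional problem to a family of finite-dimensional controllability problems through the Galerkin approximations (\ref{eq:sigma}), and then to transfer the conclusion back to (\ref{EQ_main}) by a good Galerkin approximation argument. Fix $\varepsilon>0$, $\hat{\Upsilon}\in\mathbf{U}(H)$ and $n\in\mathbf{N}$. Since $\hat{\Upsilon}$ is unitary, $\|\hat{\Upsilon}\phi_j-\pi_N\hat{\Upsilon}\phi_j\|\to 0$ as $N\to\infty$ for each $j\leq n$, so I would first choose $N\geq n$ large enough that some unitary $\hat{\Upsilon}_N\in\mathbf{U}(\mathcal{L}_N)$ reproduces $\hat{\Upsilon}\phi_1,\dots,\hat{\Upsilon}\phi_n$ up to an error $\varepsilon/3$. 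It then suffices to steer the true system $\varepsilon/3$-close, on the first $n$ eigenstates, to the finite-rank target $\hat{\Upsilon}_N$.

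The first step is controllability of the Galerkin approximation (\ref{eq:sigma}) on $\mathbf{U}(\mathcal{L}_N)$. Here I would use the Lie algebra rank condition for the right-invariant system generated by $\{A^{(N)}+uB^{(N)}:u\in U\}$ on the compact group $\mathbf{U}(\mathcal{L}_N)$. The entries of $\mathrm{ad}_{A^{(N)}}^{\,m}(B^{(N)})$ are $(\mathrm{i}(\lambda_l-\lambda_k))^m b_{lk}$, so the non-degeneracy of the transitions in the chain is precisely what guarantees that iterated brackets with $A^{(N)}$ separate the individual couplings and isolate, for each transition $(j,k)$ of the chain, the elementary skew-Hermitian rotations in $\mathrm{span}\{\phi_j,\phi_k\}$; this is the finite-dimensional counterpart of the resonant/rotating-wave mechanism underlying Proposition \ref{PRO_periodic}. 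Because the chain of connectedness links every pair of indices in $\{1,\dots,N\}$, these rotations span $\mathfrak{su}(\mathcal{L}_N)$, and the diagonal phases produced by the free evolution $e^{tA^{(N)}}$ supply the missing determinant (enlarging $N$ if needed so that $\sum_{k\leq N}\lambda_k\neq 0$). By the Lie algebra rank condition, together with the recurrence of the drift on the torus of diagonal phases, the attainable group is dense in $\mathbf{U}(\mathcal{L}_N)$, so I obtain a control $u\in PC(U)$ and a time $T$ with $\|X^u_{(N)}\phi_j-\hat{\Upsilon}_N\phi_j\|<\varepsilon/3$ for $j\leq n$.

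The second step is to show that the control $u$ found in finite dimension drives (\ref{EQ_main}) close to $X^u_{(N)}$ on the relevant eigenstates. Here I would invoke the good Galerkin approximation estimate: for piecewise constant controls the error $\|\Upsilon^u_{T,0}\phi_j-X^u_{(N)}\pi_N\phi_j\|$ tends to $0$ as $N\to\infty$, uniformly over controls whose number of switches, amplitudes and total duration remain bounded. Enlarging $N$ so that this error falls below $\varepsilon/3$, the triangle inequality combining the three $\varepsilon/3$ contributions yields $\|\Upsilon^u_{T,0}\phi_j-\hat{\Upsilon}\phi_j\|<\varepsilon$ for $1\leq j\leq n$, which gives the claim with $u_\varepsilon=u$ and $T_\varepsilon=T$.

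I expect the main obstacle to be the good Galerkin approximation of the second step. Under Assumption \ref{ASS_1} alone the operator $B$ is neither bounded nor assumed weakly coupled, so the truncation error cannot be bounded by a naive operator-norm estimate: differentiating $\|(1-\pi_N)\psi(t)\|^2$ along a trajectory leaves a term of size $|u|\,\|(1-\pi_N)\psi\|\,\|B\psi\|$, and controlling it requires a bound on $\|B\psi(t)\|$ that is not available in full generality. The point is therefore to exploit that only the finitely many initial data $\phi_1,\dots,\phi_n$ are prescribed and that the admissible controls can be restricted to piecewise constant functions with controlled structure, so that $X^u_{(N)}\pi_N$ converges to $\Upsilon^u$ \emph{strongly} on these data rather than in operator norm. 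A secondary subtlety, already addressed above, is matching the determinant and global phase in the first step, which is handled by the phase freedom of the free evolution $e^{tA^{(N)}}$.
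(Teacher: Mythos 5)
Your overall scheme---exact controllability of the Galerkin approximations plus a Galerkin error estimate---cannot prove this proposition, and the paper itself contains the counterexample showing why. The fatal point is the circular quantification between your two steps: the control $u$ you obtain from the Lie algebra rank condition depends on $N$, while the $N$ needed for the approximation estimate depends on the size (in particular the $L^1$ norm) of $u$, and a generic rank-condition argument gives no bound on that size uniformly in $N$. The harmonic oscillator in Section \ref{SEC_example} is precisely the situation where both of your intermediate conclusions hold---its Galerkin approximations of every order are exactly controllable \cite{PhysRevA.63.063410}, and it even admits Good Galerkin Approximations since it is $k$-weakly-coupled for every $k$---yet the infinite-dimensional system is not approximately controllable \cite{Rouchon}: as the paper explains, the $L^1$ norm of controls steering the $N$-th Galerkin approximation from the first to the second eigenstate blows up as $N\to\infty$. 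Since your argument uses the non-degenerate chain only in Step 2, whose conclusion the harmonic oscillator satisfies anyway, your proof would apply essentially verbatim to it and ``prove'' a false statement. In addition, the estimate you invoke in Step 3 simply does not exist under Assumption \ref{ASS_1} alone: Proposition \ref{prop:gga} requires weak coupling, and Section \ref{SEC_Anharmonic} exhibits a system satisfying exactly the hypotheses of this proposition (Assumption \ref{ASS_1} plus a non-degenerate chain of connectedness) that admits no Good Galerkin Approximation. Your closing suggestion that strong convergence on the fixed data $\phi_1,\dots,\phi_n$ should suffice does not repair this, because strong convergence for each fixed control is useless when the control itself changes with $N$. (A secondary flaw: the paper's notion of non-degenerate transition allows distinct pairs with equal gaps provided they are disjoint, so iterated brackets with $A^{(N)}$ cannot by themselves isolate a single elementary rotation as you claim.)

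The paper's proof takes a route that avoids truncation-based controllability altogether: following \cite{Schrod2}, it constructs explicit periodic controls as in Proposition \ref{PRO_periodic}, of the form $u^\ast/n$ applied over a time $nT^\ast$, whose efficiency is nonzero on the targeted transition and vanishes on all other resonant transitions---this is exactly where non-degeneracy of the chain enters. The crucial feature is that $\|u^\ast/n\|_{L^1([0,nT^\ast])}$ remains bounded as $n\to\infty$, so the induced rotation in the plane spanned by $\phi_j$ and $\phi_k$ is estimated directly on the infinite-dimensional propagator, with all other components asymptotically unaffected; composing such elementary rotations along the chain approximates $\hat{\Upsilon}$ simultaneously on $\phi_1,\dots,\phi_n$, and \cite[Proposition 3]{quadratic} then replaces the small-amplitude controls $u^\ast/n$ by controls with values in $\{0,1\}$ when $U$ does not accumulate at zero. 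If you insist on a Galerkin-flavored proof, a uniform-in-$N$ bound on the $L^1$ norm of the steering controls is the ingredient you must build into the construction; that bound is what the explicit periodic design provides and what the Lie algebra rank condition cannot.
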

\begin{proof}
The original proof given in \cite{Schrod2} is a particular case of Proposition \ref{PRO_periodic} 
(see \cite[Proof of Lemma 4.3]{Schrod2} for an explicit construction of $u^\ast$). This proof is valid if $U$ accumulates 
at zero.  Thanks to \cite[Proposition 3]{quadratic}, one can replace the sequence $u^\ast/n$ by a 
sequence of controls taking value in $\{0,1\}$. 
\end{proof}

\begin{proposition}
Let $(A,B)$ satisfy Assumption \ref{ASS_1} and admit a non-degenerate chain of connectedness $S$. 
Then, for every $\varepsilon>0$, for every $(j,k)$ in $S$, there exists a piecewise constant 
function $u_\varepsilon:[0,T_\varepsilon]\to U$ such that 
$\|\Upsilon^u_{T_\varepsilon,0}\phi_j-\phi_k\|<\varepsilon$  and 
$$\|u_\varepsilon\|_{L^1} \leq  \frac{5\pi}{4|\langle \phi_k,B\phi_j\rangle|}.$$
\end{proposition}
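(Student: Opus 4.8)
The plan is to realise the transfer along the single transition $(j,k)$ as a resonantly driven two-level Rabi rotation, and to read the $L^1$ estimate directly off the \emph{efficiency} of the exciting control. First I would pass to the interaction frame by writing $\psi=e^{tA}w$, which turns (\ref{EQ_main}) into $\dot w=u(t)\tilde B(t)w$, where the $(p,q)$ entry of $\tilde B(t)$ is $e^{\mathrm{i}(\lambda_q-\lambda_p)t}b_{pq}$. I would then excite with a control $u$ periodic of period $T=2\pi/|\lambda_j-\lambda_k|$ and resonant with $(j,k)$. Averaging $u(t)\tilde B(t)$ over one period, only the entries attached to resonant index pairs survive; since $(j,k)$ is non-degenerate and the spectrum of $u$ will be arranged so as not to excite the remaining resonances (see below), the averaged evolution issued from $\phi_j$ stays in $\mathrm{span}\{\phi_j,\phi_k\}$, on which it is the rotation of rate $|c_1||b_{jk}|$, with $c_1=\frac1T\int_0^T u(t)e^{-\mathrm{i}\omega t}\,\mathrm{d}t$ the fundamental Fourier coefficient and $\omega=|\lambda_j-\lambda_k|$. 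A full population transfer $\phi_j\to\phi_k$ (up to a phase) is reached once the rotation angle attains $\pi/2$.

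The $L^1$ bookkeeping is then immediate. At full transfer the elapsed time is $\pi/(2|c_1||b_{jk}|)$, so, recognising that the efficiency equals $|c_1|/\langle|u|\rangle$, one gets
\[
\|u\|_{L^1}=\frac{\pi}{2|b_{jk}|\,\mathrm{Eff}_{(j,k)}(u)} .
\]
To make the averaging \emph{exact} rather than formal I would invoke Proposition~\ref{PRO_periodic} with $(1,2)$ replaced by $(j,k)$, applied to the rescaled controls $u/n$ on $[0,nT]$: this forces $|\langle\phi_k,\Upsilon^{u/n}_{nT,0}\phi_j\rangle|\to1$ as $n\to\infty$, while leaving both $\|u/n\|_{L^1[0,nT]}$ and $\mathrm{Eff}_{(j,k)}(u)$ unchanged, since both are invariant under $u\mapsto u/n$, $t\mapsto nt$. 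Hence, for $n$ large, the genuine propagator realises the transfer within any prescribed accuracy, at an $L^1$ cost arbitrarily close to the right-hand side above. It then suffices to exhibit one admissible (i.e. $U$-valued) periodic shape whose efficiency is at least $2/5$: this yields exactly $\|u\|_{L^1}\le 5\pi/(4|b_{jk}|)$, the slack with respect to the ideal value $\pi/(2|b_{jk}|)$ (efficiency $\to1$) being what is needed to work with controls valued in a set $U$ that contains $0$ and $1$ but possibly no negative number.

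Two points remain to be arranged. The hypotheses of Proposition~\ref{PRO_periodic} require $\mathrm{Eff}_{(l,m)}(u)=0$ for every other transition $(l,m)$ resonant at an integer multiple of $|\lambda_j-\lambda_k|$; non-degeneracy of $(j,k)$ disposes of the fundamental frequency, and I would suppress the higher harmonics either by concentrating the Fourier spectrum of the shape at $\pm1$, or, for a strictly $\{0,1\}$-valued control, through the reduction to $\{0,1\}$-controls of \cite[Proposition 3]{quadratic} already used in the proof of Proposition~\ref{pro_approx_contr_norme_H}. Finally, Proposition~\ref{PRO_periodic} only yields $|\langle\phi_k,\Upsilon\phi_j\rangle|\to1$, that is $\Upsilon\phi_j\to e^{\mathrm{i}\theta}\phi_k$; I would remove the residual phase by appending a free evolution $e^{sA}$ (control identically $0$, hence of zero $L^1$ cost) with $s$ chosen so that $e^{\mathrm{i}(\theta+\lambda_k s)}\to1$, upgrading the modulus statement to $\|\Upsilon\phi_j-\phi_k\|<\varepsilon$.

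The hard part is that the three demands — averaging error below $\varepsilon$, no leakage towards third levels through the harmonics of the control, and $\|u\|_{L^1}\le 5\pi/(4|b_{jk}|)$ — pull in opposite directions (small error wants small amplitude and long time, the $L^1$ budget wants high efficiency). What reconciles them is precisely the simultaneous scale invariance of the $L^1$ norm and of the efficiency under $u\mapsto u/n$, $t\mapsto nt$, which decouples the error control from the $L^1$ accounting. Pinning the explicit constant then reduces to the two quantitative tasks flagged above: producing a single admissible pulse of efficiency at least $2/5$, and replacing the asymptotic conclusion of Proposition~\ref{PRO_periodic} by an effective estimate valid for a finite, explicitly chosen $n$.
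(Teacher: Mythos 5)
Your skeleton --- resonant excitation of $(j,k)$, the bookkeeping identity $\|u\|_{L^1}=\pi/(2|b_{jk}|\,\mathrm{Eff}_{(j,k)}(u^\ast))$ for the limiting Rabi transfer, the invariance of both quantities under $u^\ast\mapsto u^\ast/n$, $t\mapsto nt$, and the identification of $2/5$ as the efficiency threshold that produces $5\pi/4$ --- is indeed the mechanism underlying the result; the paper's own proof is just a citation of \cite[Proposition 2.8]{Schrod2} (case where $U$ accumulates at zero) together with \cite[Proposition 3]{quadratic} (case $U=\{0,1\}$), and the paper itself regards that cited proof as an instance of Proposition~\ref{PRO_periodic}. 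The genuine gap in your argument sits exactly where the constant gets pinned: the existence of an admissible pulse, i.e.\ a \emph{piecewise constant}, $U$-valued, $2\pi/\omega$-periodic $u^\ast$ (with $\omega=|\lambda_j-\lambda_k|$) such that $\mathrm{Eff}_{(j,k)}(u^\ast)\geq 2/5$ while $\mathrm{Eff}_{(l,p)}(u^\ast)=0$ for every other pair with gap in $\mathbf{N}\omega$. Neither of your two suggestions produces one. A pulse whose Fourier spectrum is contained in $\{0,\pm1\}$ cannot be piecewise constant (it would coincide a.e.\ with the continuous function $c_0+2\Re(c_1e^{-\mathrm{i}\omega t})$), and smooth substitutes such as $1+\cos(\omega t)$ are inadmissible because under Assumption~\ref{ASS_1} the propagator is only defined for piecewise constant controls; as for \cite[Proposition 3]{quadratic}, it converts small-amplitude controls into $\{0,1\}$-valued ones tracking the same trajectory, it does not create spectral zeros, so it cannot ``suppress harmonics''. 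Worse, the hypotheses of Proposition~\ref{PRO_periodic} can be unsatisfiable altogether: in the paper's first example (the square well) with $(j,k)=(1,2)$, every odd harmonic $m\geq3$ is resonant with a coupled pair disjoint from $\{1,2\}$ (take $l-p=1$, $l+p=3m$), and a piecewise constant $u^\ast$ with $c_m=0$ for all odd $m\geq 3$ has odd part $\tfrac12\left(u^\ast(t)-u^\ast(t+T/2)\right)$ equal a.e.\ to a pure cosine, which forces $c_1=0$. So on precisely the kind of system the statement is meant to cover, your reduction to Proposition~\ref{PRO_periodic} collapses.

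What is missing is the actual content of the cited proof (see the pointer to \cite[Proof of Lemma 4.3]{Schrod2} in the paper's proof of Proposition~\ref{pro_approx_contr_norme_H}): (i) a finite-dimensional (Galerkin) reduction showing that, for controls of bounded $L^1$ norm, the coupling of the levels $j,k$ to the levels beyond $N$ contributes an error that tends to zero as $N\to\infty$ (it is controlled through $\|(1-\pi_N)B\phi_j\|+\|(1-\pi_N)B\phi_k\|$, finite because $B\phi_j,B\phi_k\in H$), so that only the \emph{finitely many} resonances inside the $N$-block must be killed exactly; and (ii) an explicit piecewise constant pulse annihilating any prescribed finite set of harmonics while keeping the efficiency above $2/5$ uniformly in that set --- for instance by iterated shift-and-superpose cancellation, $u\mapsto u+u(\cdot-\pi/(m\omega))$, each killed harmonic $m$ costing a factor $\cos(\pi/(2m))$ in efficiency, with $\prod_{m\geq2}\cos(\pi/(2m))\approx0.43>2/5$. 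This pair of steps is where $5\pi/4$ actually comes from, and it is the part your argument leaves blank. Two lesser points: your phase correction by free evolution fails in the admissible case $\lambda_k=0$ (shift the time origin of the pulse instead, which rotates the axis of the limiting rotation and hence the terminal phase); and your final ``quantitative task'' of an effective finite-$n$ version of Proposition~\ref{PRO_periodic} is unnecessary, since an efficiency strictly above $2/5$ absorbs the $O(1/n)$ corrections in the $L^1$ accounting.
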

\begin{proof}
In the case where $U$ accumulates at zero, this is \cite[Proposition 2.8]{Schrod2}. The general case 
$U=\{0,1\}$ follows from \cite[Proposition 3]{quadratic}.
\end{proof}
A lower bound of the $L^1$ norm of the control needed to induce  a transfer from a wave 
function $\psi_a$ in the unit sphere of $H$ is given by \cite[Proposition 4.6]{Schrod}:
$$
\sup_{n\in \mathbf{N}} \frac{\big | |\langle \phi_n, \psi_a \rangle | -  
|\langle \phi_n, \Upsilon^u_{t,0} \psi_a \rangle | \big |}{\|B \phi_n\|} \leq \int_0^t \!\!\! |u(\tau)|\mathrm{d}\tau
$$
for every $(A,B)$ satisfying Assumption \ref{ASS_1}, 
every $t\geq 0$,  and every piecewise constant $u$ taking value in $U$.

\subsection{Geometric techniques: weakly-coupled systems}

\begin{proposition}[\mbox{\cite[Proposition 2]{weakly-coupled}}]\label{PRO_croissance_norme_A} Let  $(A,B)$ satisfy Assumption \ref{ASS_1} 
and be $k$-weakly-coupled.  Then,
for every $\psi_{0} \in D(|A|^{k/2})$, $K>0$,
$T\geq 0$, and $u$ piecewise constant such that
$\|u\|_{L^1}< K$, one has
$\left\||A|^{\frac{k}{2}}\Upsilon^{u}_{T}(\psi_{0})\right\| \leq
e^{c_k(A,B) K} \| |A|^{\frac{k}{2}}\psi_0 \|.$
\end{proposition}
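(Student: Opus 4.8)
The plan is to reduce the statement to a scalar differential inequality for the squared norm $y(t)=\||A|^{k/2}\Upsilon^{u}_{t,0}\psi_0\|^2=\langle|A|^k\psi(t),\psi(t)\rangle$, where $\psi(t)=\Upsilon^{u}_{t,0}\psi_0$, and then to close it by Gronwall's inequality. First I would treat a \emph{constant} control $u\in U$, since a general piecewise constant control is recovered by concatenation and the associated $L^1$ norms add over the intervals of constancy. For constant $u$ one has $\psi(t)=e^{t(A+uB)}\psi_0$, and differentiating $y$ along the equation $\dot\psi=(A+uB)\psi$ gives
\[
\dot y=\langle|A|^k(A+uB)\psi,\psi\rangle+\langle|A|^k\psi,(A+uB)\psi\rangle.
\]
The drift part vanishes: since $|A|^k$ commutes with $A$ and $A$ is skew-adjoint, $\langle|A|^kA\psi,\psi\rangle=\langle A|A|^k\psi,\psi\rangle=-\langle|A|^k\psi,A\psi\rangle$, so the two $A$-terms cancel (equivalently, $e^{tA}$ is unitary and commutes with $|A|^{k/2}$, hence leaves $y$ invariant). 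For the control part, self-adjointness of $|A|^k$ yields $\langle|A|^kB\psi,\psi\rangle=\overline{\langle|A|^k\psi,B\psi\rangle}$, whence
\[
\dot y=2u\,\Re\langle|A|^k\psi,B\psi\rangle.
\]

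The coupling constant is tailored precisely to bound this right-hand side. Since $y(t)=\langle|A|^k\psi,\psi\rangle=\||A|^{k/2}\psi\|^2\geq0$, the fourth item of Definition~\ref{DEF_weak_coupling} together with the definition of $c_k(A,B)$ gives $|\Re\langle|A|^k\psi,B\psi\rangle|\leq c_k(A,B)\,y$, so that $|\dot y|\leq 2|u|\,c_k(A,B)\,y$. Gronwall's inequality then produces $y(t)\leq y(0)\exp\!\big(2c_k(A,B)\,|u|\,t\big)$, and taking square roots yields $\||A|^{k/2}\psi(t)\|\leq e^{c_k(A,B)\,|u|\,t}\||A|^{k/2}\psi_0\|$. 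Concatenating this estimate over the successive intervals on which a piecewise constant $u$ is constant replaces $|u|\,t$ by $\sum_j|u_j|(t_{j+1}-t_j)=\|u\|_{L^1}$, and the hypothesis $\|u\|_{L^1}<K$ gives the claimed bound $\||A|^{k/2}\Upsilon^{u}_{T,0}\psi_0\|\leq e^{c_k(A,B)K}\||A|^{k/2}\psi_0\|$.

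The main obstacle is the rigor of the differentiation step: the formal computation presupposes that $\psi(t)\in D(|A|^k)$ and that $t\mapsto y(t)$ is continuously differentiable, whereas the pairing $\langle|A|^k\psi,B\psi\rangle$ is only controlled, not manifestly finite. I would first establish the estimate for $\psi_0\in D(|A|^k)$ and then extend it to $D(|A|^{k/2})$ by density, and I would justify the differentiation through a regularization that commutes with $A$, for instance replacing $|A|^k$ by the bounded operators $|A|^k(1+\varepsilon|A|^k)^{-1}$ and letting $\varepsilon\to0$, so that all pairings are well defined and the cancellation of the drift term is exact at each $\varepsilon$. The crucial structural input making this limit work is item (2) of Definition~\ref{DEF_weak_coupling}, namely $D(|A+uB|^{k/2})=D(|A|^{k/2})$, which guarantees that $e^{t(A+uB)}$ leaves $D(|A|^{k/2})$ invariant; items (1) and (3) ensure, respectively, that $A+uB$ generates the propagator on the correct domain and that $B$ is subordinate to $|A|$, so that the error terms introduced by the regularization are dominated by $y$ and vanish as $\varepsilon\to0$.
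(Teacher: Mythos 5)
The paper itself contains no proof of this proposition: it is quoted directly from \cite[Proposition 2]{weakly-coupled}, so the only comparison possible is with the argument of that reference, which is precisely the one you give --- differentiate $y(t)=\langle |A|^k\psi(t),\psi(t)\rangle$ along a constant control, cancel the drift term by skew-adjointness of $A$ and its commutation with $|A|^k$, bound the remaining term by $2|u|\,c_k(A,B)\,y(t)$ using the very definition of the coupling constant, apply Gronwall, and concatenate over the intervals of constancy so that only $\|u\|_{L^1}$ appears in the exponent. Your proposal is correct, including the identification of the genuine technical point (rigorous differentiability of $y$) and of the structural inputs that resolve it: item (2) of Definition~\ref{DEF_weak_coupling} gives invariance of $D(|A|^{k/2})$ under the flow, hence an a priori finite and locally bounded $y(t)$, and item (3) makes the error terms of a spectral regularization such as $|A|^k(1+\varepsilon|A|^k)^{-1}$ vanish as $\varepsilon\to 0$, after which Gronwall passes to the limit.
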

\begin{proposition}[\mbox{\cite[Proposition 4]{weakly-coupled}}]\label{prop:gga}
 Let $k$ in $\mathbf{N}$ and $(A,B)$  satisfy Assumption \ref{ASS_1} and be $k$-weakly-coupled.
Then
for every $\varepsilon > 0 $, $s<k$, $K> 0$, $n\in \mathbf{N}$, and
$(\psi_j)_{1\leq j \leq n}$ in $D(|A|^{k/2})^n$
there exists $N \in \mathbf{N}$
such that
for every piecewise constant function $u$
$$
\|u\|_{L^{1}} < K \Rightarrow\| |A|^{\frac{s}{2}}(\Upsilon^{u}_{t}(\psi_{j}) -
X^{u}_{(N)}(t,0)\pi_{N} \psi_{j})\|\! <\! \varepsilon,
$$
for every $t \geq 0$ and $j=1,\ldots,n$.
\end{proposition}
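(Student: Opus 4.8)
The plan is to compare the exact flow with its Galerkin compression by controlling separately the portion of the solution supported on the modes beyond $N$ and the finite-dimensional discrepancy inside $\mathcal{L}_N$, using the a priori growth estimate of Proposition \ref{PRO_croissance_norme_A} to tame the high-frequency tail uniformly in $t$ and in $u$. Write $\psi(t)=\Upsilon^u_t(\psi_j)$ and $x_N(t)=X^u_{(N)}(t,0)\pi_N\psi_j$. Proposition \ref{PRO_croissance_norme_A} gives $\||A|^{k/2}\psi(t)\|\le e^{c_k(A,B)K}\||A|^{k/2}\psi_j\|$ for all $t\ge0$ and all $u$ with $\|u\|_{L^1}<K$. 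First I would observe that the \emph{same} bound holds for $x_N$, \emph{uniformly in} $N$: for $v\in\mathcal{L}_N$ one has $|A|^k v\in\mathcal{L}_N$ and $B^{(N)}v=\pi_N Bv$, whence $\Re\langle|A^{(N)}|^k v,B^{(N)}v\rangle=\Re\langle|A|^k v,Bv\rangle$ and $\langle|A^{(N)}|^k v,v\rangle=\langle|A|^k v,v\rangle$. Thus condition 4 of Definition \ref{DEF_weak_coupling} passes to the compression $(A^{(N)},B^{(N)})$ with coupling constant $\le c_k(A,B)$, and the Gr\"onwall argument underlying Proposition \ref{PRO_croissance_norme_A} yields $\||A|^{k/2}x_N(t)\|\le e^{c_k(A,B)K}\||A|^{k/2}\psi_j\|$. (Since $A$ and $A+B$ are essentially skew-adjoint, $A^{(N)}+uB^{(N)}$ is skew-Hermitian, a fact needed below.) Set $M=e^{c_k(A,B)K}\max_{1\le j\le n}\||A|^{k/2}\psi_j\|$, so that $\||A|^{k/2}\psi(t)\|\le M$ and $\||A|^{k/2}x_N(t)\|\le M$.

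Next I would split $\psi(t)-x_N(t)=(I-\pi_N)\psi(t)+e_N(t)$, with $e_N(t)=\pi_N\psi(t)-x_N(t)\in\mathcal{L}_N$. Because $\lambda_k\to-\infty$, only finitely many $|\lambda_l|$ stay bounded, so $\beta_N:=\inf_{l>N}|\lambda_l|\to\infty$. As $s<k$, the tail obeys
\[
\||A|^{s/2}(I-\pi_N)\psi(t)\|^2=\sum_{l>N}|\lambda_l|^{s}\,|\langle\phi_l,\psi(t)\rangle|^2\le \beta_N^{\,s-k}\,\||A|^{k/2}\psi(t)\|^2\le \beta_N^{\,s-k}M^2,
\]
which tends to $0$ as $N\to\infty$, uniformly in $t$ and $u$.

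For the finite-dimensional part, projecting (\ref{EQ_main}) and using $\pi_N A\psi=A^{(N)}\pi_N\psi$ and $\pi_N B\psi=B^{(N)}\pi_N\psi+\pi_N B(I-\pi_N)\psi$ gives $\dot e_N=(A^{(N)}+uB^{(N)})e_N+u\,\pi_N B(I-\pi_N)\psi$ with $e_N(0)=0$. Since $X^u_{(N)}$ is unitary, Duhamel's formula together with condition 3 of Definition \ref{DEF_weak_coupling} ($\|B\phi\|\le d\||A|^{r/2}\phi\|$, $r<k$) yields
\[
\|e_N(t)\|\le\int_0^t|u(\tau)|\,\|B(I-\pi_N)\psi(\tau)\|\,d\tau\le dKM\,\beta_N^{(r-k)/2}=:\eta_N,
\]
and $\eta_N\to0$ because $r<k$. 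From $\||A|^{k/2}e_N(t)\|\le\||A|^{k/2}\pi_N\psi(t)\|+\||A|^{k/2}x_N(t)\|\le 2M$, the elementary interpolation $\||A|^{s/2}w\|\le\||A|^{k/2}w\|^{s/k}\|w\|^{1-s/k}$ (H\"older on the spectral sum, for $0\le s<k$) gives $\||A|^{s/2}e_N(t)\|\le(2M)^{s/k}\eta_N^{\,1-s/k}\to0$, uniformly in $t$ and $u$. Adding the two contributions and choosing $N$ so that each is below $\varepsilon/2$ (and taking the worst case over the finitely many $\psi_j$) concludes the argument.

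The step I expect to be the main obstacle is the uniform-in-$N$ a priori bound for the Galerkin trajectories: every later estimate rests on the growth constant for $(A^{(N)},B^{(N)})$ being independent of $N$, and one must verify that compressing the operators does not degrade condition 4 of Definition \ref{DEF_weak_coupling}. The identity $\Re\langle|A^{(N)}|^k v,B^{(N)}v\rangle=\Re\langle|A|^k v,Bv\rangle$ for $v\in\mathcal{L}_N$ is exactly what makes this work; the tail bound and the interpolation are then routine.
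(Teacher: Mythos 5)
Your proof is correct and follows essentially the same route as the source this paper cites for the statement (the paper itself gives no proof, only the reference to \cite[Proposition 4]{weakly-coupled}): a priori $|A|^{k/2}$-bounds for both the exact and the Galerkin trajectories, a spectral tail estimate, a Duhamel comparison inside $\mathcal{L}_N$ using the relative bound $\|B\psi\|\leq d\||A|^{r/2}\psi\|$, and interpolation to pass to the $|A|^{s/2}$-norm. In particular, the step you flag as the crux --- that the compressions $(A^{(N)},B^{(N)})$ inherit condition 4 of Definition~\ref{DEF_weak_coupling} with coupling constant at most $c_k(A,B)$, uniformly in $N$ --- is exactly the observation the original argument rests on, and your verification of it is sound.
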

\begin{remark}
Notice that, in Propositions~\ref{PRO_croissance_norme_A} and~\ref{prop:gga}, the upper bound of the 
$|A|^{k/2}$ norm of the solution of (\ref{EQ_main}) or the bound on the error between the infinite dimensional 
system and its finite dimensional approximation only depend on the $L^1$ norm of the control, not on the time. 
\end{remark}
The a priori bound for the $|A|^{k/2}$ norm combined with an interpolation argument allows 
to deduce approximate controllability in $|A|^{r/2}$ norm from the approximate controllability 
in $A^0$ norm (i.e., the norm of $H$):
\begin{proposition}
Let  $(A,B)$ satisfy Assumption \ref{ASS_1}, be $k$-weakly-coupled and admit a non-degenerate chain of connectedness. 
Then, for every $\varepsilon>0$, for every $n$ in $\mathbf{N}$, for every unitary operator $\hat{\Upsilon}$ in $\mathbf{U}(H)$, for every $r<k/2$,
there exists $u_\varepsilon:[0,T_\varepsilon]\to \{0,1\}$ such that
$\||A|^r(\hat{\Upsilon}\phi_j -\Upsilon^{u_\varepsilon}_{T_\varepsilon,0}\phi_j )\| <\varepsilon$ for $j\leq n$. 
\end{proposition}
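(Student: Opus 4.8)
The plan is to combine approximate controllability in the $H$-norm (the weakest topology) with the a priori growth bound on the $|A|^{k/2}$-norm from Proposition~\ref{PRO_croissance_norme_A}, using an interpolation inequality to control the intermediate $|A|^{r/2}$-norm for $r<k/2$. First I would apply Proposition~\ref{pro_approx_contr_norme_H} (together with the reduction to controls valued in $\{0,1\}$ already available there) to obtain, for a prescribed tolerance $\delta>0$, a piecewise constant control $u_\delta:[0,T_\delta]\to\{0,1\}$ such that $\|\Upsilon^{u_\delta}_{T_\delta,0}\phi_j-\hat{\Upsilon}\phi_j\|<\delta$ for every $j\leq n$. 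The point of working in the $H$-norm first is that this is precisely where the geometric/non-degenerate-chain machinery delivers approximate controllability unconditionally.

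Next I would seek a uniform bound on the $|A|^{k/2}$-norm of the two competing states. For the controlled trajectory, Proposition~\ref{PRO_croissance_norme_A} gives $\||A|^{k/2}\Upsilon^{u_\delta}_{T_\delta,0}\phi_j\|\leq e^{c_k(A,B)K}\||A|^{k/2}\phi_j\|$, provided $\|u_\delta\|_{L^1}<K$; since the eigenvectors $\phi_j$ lie in $D(|A|^{k/2})$ this right-hand side is finite. The essential subtlety here is that the bound in Proposition~\ref{PRO_croissance_norme_A} depends only on $\|u_\delta\|_{L^1}$ and \emph{not} on $T_\delta$, so before invoking it I must verify that the $L^1$-norm of $u_\delta$ stays bounded as $\delta\to0$. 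Because $u_\delta$ takes values in $\{0,1\}$, its $L^1$-norm equals the total duration the control is switched on; controlling this uniformly, rather than letting it blow up as the approximation sharpens, is the main obstacle, and I would address it by appealing to the explicit $L^1$ estimates underlying the construction in \cite{Schrod2,quadratic} (cf.\ the preceding proposition, which produces transfers with $L^1$-cost bounded by $5\pi/(4|\langle\phi_k,B\phi_j\rangle|)$). For the target, $\hat{\Upsilon}$ is unitary but need not preserve $D(|A|^{k/2})$, so I would either restrict to $\hat{\Upsilon}$ mapping the relevant span into $D(|A|^{k/2})$ or absorb $\|\hat{\Upsilon}\phi_j\|_{|A|^{k/2}}$ into the constants, noting that the statement only asks to approximate $\hat{\Upsilon}\phi_j$ in the weaker $|A|^r$-norm.

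With both states lying in a ball of fixed radius $M$ in $D(|A|^{k/2})$, I would invoke the interpolation inequality: for $0\le r<k/2$ there is $\theta=2r/k\in[0,1)$ and a constant such that for every $\xi\in D(|A|^{k/2})$,
\begin{equation}
\||A|^{r}\xi\|\le \|\xi\|^{1-\theta}\,\||A|^{k/2}\xi\|^{\theta}.
\end{equation}
Applying this to $\xi=\hat{\Upsilon}\phi_j-\Upsilon^{u_\delta}_{T_\delta,0}\phi_j$ yields
\begin{equation}
\||A|^{r}(\hat{\Upsilon}\phi_j-\Upsilon^{u_\delta}_{T_\delta,0}\phi_j)\|\le (2M)^{\theta}\,\delta^{1-\theta}.
\end{equation}
Since $1-\theta>0$, the right-hand side tends to $0$ as $\delta\to0$. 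Finally, given the desired $\varepsilon>0$ I would choose $\delta$ small enough that $(2M)^{\theta}\delta^{1-\theta}<\varepsilon$ for all $j\le n$ simultaneously, and set $u_\varepsilon:=u_\delta$, $T_\varepsilon:=T_\delta$. This completes the argument modulo the uniform $L^1$-bound discussed above, which is where the real care is required.
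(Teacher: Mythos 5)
Your proposal is correct and follows essentially the same route as the paper: the paper's own ``proof'' is merely a citation to \cite[Proposition 5]{weakly-coupled} (plus \cite[Proposition 3]{quadratic} for the reduction to $\{0,1\}$-valued controls), and the argument behind that cited result is precisely the one you reconstruct --- approximate controllability in the $H$-norm with controls of $\varepsilon$-independent $L^1$-norm, the a priori $|A|^{k/2}$-bound of Proposition~\ref{PRO_croissance_norme_A}, and interpolation between the two norms --- exactly as announced in the sentence preceding the proposition. You also correctly isolate the two genuine technical points (the uniform $L^1$-bound as the tolerance shrinks, and the implicit requirement that $\hat{\Upsilon}\phi_j$ lie in $D(|A|^{k/2})$, a hypothesis the survey's statement silently omits), so no gap remains beyond what the paper itself delegates to the references.
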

\begin{proof}
This would be \cite[Proposition 5]{weakly-coupled} if the controls $u_\varepsilon$ took value in $(0,1)$. 
The 
case $U=\{0,1\}$ follows from \cite[Proposition 3]{quadratic}.
\end{proof}

\subsection{Other results}

Let us mention in this section 
some other 
results concerning 
quantum control problems on infinite-dimensional spaces 
which do not
satisfy Assumption~\ref{ASS_1}. 

First of all, some papers deal with the case where the drift Hamiltonian has some 
continuous spectrum and consider the problem of approximately controlling between 
the eigenstates corresponding to the discrete part of the spectrum. In particular, 
in \cite{mirrahimi-continuous} Mirrahimi considers the case of a drift operator of 
the form $-\Delta+V$ on $\mathbf{R}^d$, where $V$ is a potential decaying at infinity. 
The controllability is proved using a Lyapunov technique and estimating the interaction 
with continuum spectrum thanks to Strichartz estimates. 

Another important class of systems exhibiting continuous spectrum is obtained by considering 
the ensemble control of Bloch equations. 
The corresponding system consists in a continuum of finite-dimensional systems 
coupled by the control parameter only. Each system of the ensemble is parameterized 
by a characteristic frequency.
Controllability results in this setting have been obtained  in \cite{LiKhanejaPRA,LiKhanejaTAC,BeauchardCoronRouchon}.

Other interesting class of problems is given by models for 
a quantum oscillator coupled with a spin  
(see \cite{rangan,puel}).
The spectrum in this case is discrete, but it \emph{intrinsically} presents degenerate transitions. 
The controllability results are obtained exploiting the 
presence of more than one control. 

Let us finally mention the widely used adiabatic methods. They require the use of several controls 
(not only one, as in Equation (\ref{EQ_main})) and rely on adiabatic theory and the interesections of eigenvalues. 
Approximate controllability is obtained through slow variations of the different controls (see \cite{adiabatic}).

\section{FOUR EXAMPLES}
\label{SEC_example}

\subsection{Infinite square potential well}
The first example we consider describes a particle confined in a 1D box $(0,\pi)$. 
 This model has been extensively studied
by several authors in the last few years and it has been  the first quantum system for 
which a positive controllability
 result has been obtained. Beauchard proved exact controllability in some dense subsets 
of $L^2$ first using Coron's return method (\cite{beauchard}), then standard linear 
test (\cite{camillo}). Nersesyan obtained approximate controllability results using 
Lyapunov techniques (\cite{Nersesyan,beauchard-nersesyan}), which allowed to obtain the global result (i.e., Theorem \ref{THM_beauchard} recalled in Section~\ref{b-c}).

The Schr\"{o}dinger equation writes
\begin{equation}\label{EQ_potential_well}
 \mathrm{i} \frac{\partial \psi}{\partial t}=-\frac{1}{2} \frac{\partial^2 \psi}{\partial x^2} - u(t) x \psi(x,t)
\end{equation}
with boundary conditions $\psi(0,t)=\psi(\pi,t)=0$ for every $t \in \mathbf{R}$.

With our notations, ${H}=L^2 \left ( (0,\pi), \mathbf{C} \right )$ endowed with 
the Hermitian product $\langle \psi_1,\psi_2\rangle= \int_{0}^{\pi} \overline{\psi_1(x)} \psi_2(x) dx$. 
The operators $A$ and $B$ are defined by $A\psi= \mathrm{i}\frac{1}{2} \frac{\partial^2 \psi}{\partial x^2}$ 
for every $\psi$ in
$D(A)= (H_2 \cap  H_0^1 )\left((0,\pi), \mathbb{C}\right)$, and $B\psi:x\mapsto \mathrm{i} x \psi(x)$.
A Hilbert basis of $H$ is $(\phi_k)_{k\in \mathbf{N}}$ with $\phi_k:x\mapsto \sin(kx)/\sqrt{2}$. 
For every $k$, $A\phi_k=-\mathrm{i}(k^2/2) \phi_k$.

For every $j,k$ in $\mathbf{N}$, 
$$
b_{jk}=\langle \phi_j, B \phi_k\rangle =
\left\{
\begin{array}{ll}
(-1)^{j+k}\frac{ 2 jk}{(j^2-k^2)^2} & \mbox{ if } j-k \mbox{ odd}\\
0 & \mbox{otherwise}.
\end{array}
\right.
$$

Despite numerous degenerate transitions, the system is approximately 
controllable (see \cite[Section 7]{Schrod2}).

One can directly check that $(A,B)$ is $2$-weakly-coupled.
By Proposition \ref{PRO_obstruction_weak_coupling}, the system cannot be $k$-weakly-coupled 
for $k>3$ (since the attainable set from any eigenvector of $A$ contains the intersection of 
the unit $L^2$ sphere with $H^3_{(0)}=D(|A|^{3/2})$).  

For example of control designs and numerical simulations, we refer to \cite[Section IV]{QG}.
\subsection{Harmonic oscillator}
The quantum harmonic oscillator is among the most important examples of quantum
system (see, for instance, \cite[Complement $G_V$]{cohen77}). Its controlled
version has been extensively studied (see, for instance, \cite{Rouchon,illner}).
In this example $H=L^2(\mathbf{R},\mathbf{C})$
and equation (\ref{EQ_main}) reads
\begin{equation}\label{EQ_harmonic_oscillator}
\mathrm{i}\frac{\partial \psi}{\partial t}(x,t)=\frac{1}{2}(-\Delta
+x^2)\psi(x,t) +u(t) x \psi(x,t).
\end{equation}
A Hilbert basis of $H$ made of eigenvectors of $A$ is given by the sequence of
the Hermite functions $(\phi_n)_{n \in \mathbf{N}}$, associated with the
sequence $( - \mathrm{i} \lambda_n)_{n \in \mathbf{N}}$ of eigenvalues where
$\lambda_n=n-1/2$ for every $n$ in $\mathbf{N}$. In the basis $(\phi_n)_{n \in
\mathbf{N}}$, $B$ admits a tri-diagonal structure
$$
\langle \phi_j,B\phi_k\rangle = \left \{\begin{array}{cl}
- \mathrm{i} \sqrt{\frac k2} & \mbox{if } j=k-1,\\
- \mathrm{i}\sqrt{\frac{k+1}2} & \mbox{if } j=k+1,\\
0 & \mbox{otherwise.}
\end{array} \right.
$$
For every $k$ in $\mathbf{N}$, the system $(A,B)$ is
$k$-weakly-coupled (see \cite{weakly-coupled}) and
\begin{eqnarray*}
c_k(A,B) &\leq &3^{k}-1.
\end{eqnarray*}

The quantum harmonic oscillator is not controllable (in any reasonable
sense) as proved in~\cite{Rouchon}.
 However, the Galerkin approximations of
(\ref{EQ_harmonic_oscillator}) of every order are exactly controllable
(see~ \cite{PhysRevA.63.063410}), and Proposition~\ref{prop:gga} ensures that any trajectory 
of the infinite dimensional system is a uniform limit of trajectories of its Galerkin 
approximations. This is not a
contradiction, since Proposition~\ref{prop:gga} does not say that  every trajectory of every 
Galerkin approximation  is close to the 
trajectory of the infinite-dimensional system having the same initial condition and corresponding to the same control. 
What happens for the quantum oscillator is that 
if one wants
to steer 
the Galerkin approximation of order $N$ of (\ref{EQ_harmonic_oscillator}) 
from a given state (say, the first eigenstate) to an
$\varepsilon$-neighbourhood of another given target (say, the second eigenstate), the $L^1$ norm of 
the control blows up as $N$ tends to infinity.
It is compatible with Proposition~\ref{prop:gga} that the sequence of these trajectories does not converge to a trajectory of 
(\ref{EQ_harmonic_oscillator}). 


To obtain an estimate of the order $N$ of the Galerkin approximation whose
dynamics remains  $\varepsilon$ close to the one of the infinite dimensional
system when using control with $L^1$-norm $K$, one can  use
 \cite[Remark 8]{weakly-coupled} and we find that $\|X^{(N)}_u(t,0) \phi_1 -
\pi_N \Upsilon^u_t \phi_1\|\leq \varepsilon$ provided $\|u\|_{L^1} \leq K$ and
$$
\frac{2^{N-1} \sqrt{N+2}}{(N-1)!}\sqrt{\frac{(2N)!}{(N+1)!}} K^N<\varepsilon.
$$
For instance, if $K=3$ and $\varepsilon=10^{-4}$, this is true for $N= 413$.

\subsection{Planar rotation of a linear molecule}
The next example involves a bilinear Schr\"{o}dinger equation on a manifold with non-trivial topology.

We consider  a rigid bipolar molecule rotating in a plane. Its only
degree of freedom is the rotation around its centre of mass.  The molecule
is submitted to an electric field of constant direction with variable intensity
$u$.  The orientation of the molecule is an angle in $\Omega=SO(2) \simeq
\mathbf{R}/2\pi \mathbf{Z}$. The dynamics is governed by the Schr\"odinger
equation
$$
\mathrm{i}\frac{\partial \psi(\theta,t) }{\partial t} =
\left(-\frac{\partial^2}{\partial \theta^2} + u(t)\cos \theta \right)
\psi(\theta, t), \quad \theta \in \Omega.
$$
Note that the parity (if any) of the wave function is preserved by the above
equation.  We consider then the Hilbert space $H=\{\psi\in
L^2(\Omega,\mathbf{C}): \psi \mbox{ odd} \}$, endowed with the Hilbert product
$\langle f,g\rangle =\int_{\Omega} \bar{f}g$. The eigenvalue of the skew-adjoint
operator $A = \mathrm{i}\frac{\partial^2}{\partial \theta^2}$
associated with the eigenfunction $\phi_k:\theta\mapsto\sin(k\theta)/\sqrt{\pi}$
is $-\mathrm{i} \lambda_k = - \mathrm{i} k^2$, $k \in \mathbf{N}$.
The domain of $|A|^{k}$ is the Hilbert space
$H^k_e=\{\psi\in H^{2k}(\Omega,\mathbf{C}): \psi \mbox{ odd}
\}$. The skew-symmetric operator $B=-\mathrm{i} \cos \theta$ is bounded on
$D(|A|^{k/2})$ for every $k$.
For every $k$ in $\mathbf{N}$, $(A,B)$ is
$k$-weakly-coupled (\cite[proposition 8]{weakly-coupled}).
For every $k$ in $\mathbf{N}$, $c_k(A,B)\leq\frac{2^{2k}-1}{2}$.

From the point of view of the controllability problem, notice that the operator $B$ couples only
adjacent eigenstates, that is, $\langle
\phi_l, B \phi_j \rangle = 0$ if and only if $|l-j|>1$. Since $\lambda_{l+1} -
\lambda_l = 2l+1$ then $\{(j,l)\in \mathbf{N}^2 \,:\, |l-j|=1\}$ is
a non-degenerate connectedness chain for $(A,B)$. Therefore, by 
\cite[Proposition 5]{weakly-coupled} the system provides an example of approximately
controllable system in norm $H^k(\Omega,\mathbf{C})$ for every
$k$. Note that, since the eigenstates are in $H^{k}(\Omega,\mathbf{C})$ for every $k$ then
the
reachable set from any eigenstate is contained in $H^{k}(\Omega,\mathbf{C})$ for every
$k$.
%

\subsection{Everywhere dense attainable set and no Good Galerkin Approximation}\label{SEC_Anharmonic}
To the best of our knowledge, the following academic example does not appear in the 
physics literature. For $\alpha$ in $\mathbf{N}$, consider the following bilinear Schr\"{o}dinger equation 
\begin{eqnarray}
\lefteqn{\mathrm{i} \frac{\partial \psi}{\partial t}=
\left \lbrack \left ( -\frac{1}{2} \Delta +x^2 \right )^\alpha +  
\left ( -\frac{1}{2} \Delta +x^2 \right )^{-1} \right \rbrack \psi  }&&  \nonumber\\
&&\quad \quad + u(t) x^4 \psi(x,t), \qquad  x \in \mathbf{R}. \quad \quad \quad 
\end{eqnarray}

This strongly perturbed harmonic oscillator checks the controllability conditions of 
\cite[Proposition 2.8]{Schrod2}, with $H$ equal to the set of even $L^2$ functions on 
$\mathbf{R}$, $A=-\mathrm{i}[(-\Delta/2+x^2)^\alpha+(-\Delta/2+x^2)^{-1}]$ 
and $B=-\mathrm{i}x^4$ . The bilinear Schrödinger equation is well-posed for piecewise 
constant nonnegative controls $u$ (see \cite[Theorems XIII.69 and XIII.70]{reed-simon-3}). 
A basis of $H$ made of eigenvectors of $A$ is given by $(\phi_{2n})_n$ where $\phi_k$ is 
the $k^{th}$ Hermite function.   A non-degenerate chain of connectedness of $(A,B)$ is 
$\{(j,j+1),j\in \mathbf{N}\}$. Since $|\langle \phi_j,B \phi_{j+1} \rangle |\sim j^{-2}$, 
$(|\langle \phi_j,B \phi_{j+1} \rangle |^{-1})_j \in \ell^1$. As a consequence, it is possible 
to join (approximately) any energy level from the first one with a control of $L^1$ norm 
less than $(5\pi/4) \sum_{j \in \mathbf{N}}|\langle \phi_j,B \phi_{j+1} \rangle |^{-1}<+\infty$. 
Hence the system does not admit Good Galerkin Approximations in the spirit of Proposition \ref{prop:gga}, since it is possible to reach arbitrary high energy levels using controls with a given finite $L^1$ norm.

\section{FIVE OPEN QUESTIONS}\label{SEC_questions}
\addtolength{\textheight}{-2.1cm}
\subsection{Attainable set of weakly-coupled systems}
Most of the bilinear quantum systems we encountered in the physics literature are 
$k$-weakly-coupled for every $k>0$.
We have already seen that if $(A,B)$ is $k$-weakly-coupled for every $k>0$, then the 
attainable set from any eigenvector of $A$ is contained in $\cap_{k>0} D(|A|^k)$, the 
intersection of the domains of all the iterations of $A$. This prevents a direct application 
of the linear test used in \cite{camillo} because of the difficulty to endow  
$\cap_{k>0} D(|A|^k)$ (or a subspace of it)  with a Banach structure. But it does not forbid (a priori) 
 the eigenstates of $A$ to be in the attainable set of the first eigenstate. The complete description
of the attainable set from the first eigenstate is likely out of reach without new powerful methods. 
One may consider the less challenging 
\begin{question}
Let $(A,B)$ be  $k$-weakly-coupled for every $k>0$. Give (explicitly) a state $\psi_b$ not colinear 
to $\phi_1$ such that there exist a control $u$ in $L^1(\mathbf{R},\mathbf{R})$ and a 
time $T>0$ for which $\Upsilon^u_{T,0}\phi_1=\psi_b$.
\end{question}

\subsection{Minimal time}
Let $(A,B)$ satisfy Assumption \ref{ASS_1} and admit a non-degenerate chain of connectedness. 
From Proposition \ref{pro_approx_contr_norme_H}, we know that 
$\cup_{t\geq 0}\{\Upsilon^u_{t,0}\phi_1|u \in PC\}$ is dense in $H$. We define 
$$
\rho=\inf \left \{\! T\geq 0 \mbox{ such that } \overline{\cup_{0\leq t\leq T} \{\Upsilon^u_{t,0}\phi_1|u \in PC\}}=H \right \}\!.
$$
It is classical that $\rho>0$  if $A$ is bounded, which is the case for instance if 
$H$ is finite dimensional. The computation of $\rho$ is difficult in practice. At present 
time, $\rho$ is unknown for all the examples of Section \ref{SEC_example}. 
An example ($H=L^2(\mathbf{R}/2\pi\mathbf{Z},\mathbf{C})$, $A= \mathrm{i}(-\Delta)^\alpha$ 
with $\alpha>5/2$, $B: \psi \mapsto \mathrm{i} \cos(\theta) \psi $) has been 
recently exhibited for which $\rho=0$, see \cite{Time}. 
\begin{question}
Does it exist $(A,B)$ $k$-weakly-coupled for every $k>0$ such that $A$ is unbounded, 
$B$ has no eigenvector and $\rho>0$?
\end{question}
A related question has been investigated by Beauchard and Morancey in \cite{arXiv:1208.5393}, 
where they give a set of sufficient conditions for the attainable set of a $3$-weakly-coupled 
system in small time with small controls  to have empty or non-empty  interior in $D(|A|^{3/2})$.

\subsection{Transfer time and size of controls}

As previously said, large controls may, for some examples, allow approximate 
controllability in arbitrarily small time. For weakly-coupled systems, it can be easily proved 
(see \cite{Time}) that an a priori bound on the $L^1$ norm of the control is not compatible with approximate controllability 
in arbitrarily small time. In practice (in particular when using adiabatic methods), one often 
applies very small controls, what results in large transfer time.
\begin{question}
 An upper bound on the $L^1$ norm of the control being given, 
what is the smallest possible time needed to transfer a given system $(A,B)$ from the first eigenstate of $A$ to the second one?  
\end{question}

\subsection{Minimal number of switches}

 In the case where $B$ is bounded, the following  computation
\begin{eqnarray*}
\|A e^{t(A+uB)}\psi\| &=&\|(A+uB-uB) e^{t(A+uB)}\psi\| \\
	&\leq & \|(A+uB) e^{t(A+uB)}\psi\| + |u| \| B\|\\
	&\leq &  \| e^{t(A+uB)}(A+uB) \psi\| + |u| \| B\|\\
	&\leq & \|A \psi\| + 2 |u| \|B\|,
\end{eqnarray*}
valid for every $u$ in $U$, $t\geq 0$ and  $\psi$ in the intersection of the unit sphere 
of $H$ and $D(A)$, gives an upper bound of variation of the energy of the system in term 
of the total variation of the control $u$. This provides a lower bound of the number of 
discontinuities of a piecewise constant control taking value in $\{0,1\}$ to reach a given target.

Let $(A,B)$ satisfy Assumption \ref{ASS_1}. If $(A,B)$ admits a non-degenerate chain of connectedness, then for 
every $\psi_b$ in the unit sphere of $H$, for every $\varepsilon>0$, there exists 
$u_\varepsilon:[0,T_\varepsilon]\to \{0,1\}$ such that $\|\Upsilon^u_{T_u,0}\phi_1 -\psi_b \|<\varepsilon$. 
Using  \cite[Proposition 3]{quadratic}, it is possible to build $u_\varepsilon$ with a number 
of discontinuities of the order of $1/\varepsilon$. 
\begin{question}
Is it possible to build $u_\varepsilon$ with a number of discontinuities of order 
$\displaystyle{o_{\varepsilon \to 0} \left (\frac{1}{\varepsilon}\right )}$?
\end{question}

\subsection{Good Galerkin Approximations  for general systems}

The existence of Good Galerkin Approximations is of crucial interest for the theoretical 
analysis and the numerical simulation of bilinear quantum systems. 
For systems that are not weakly-coupled (e.g., example of Section \ref{SEC_Anharmonic}), 
there is no equivalent of Proposition \ref{prop:gga} in general. However, if $(A,B)$ has the particular 
form $A=-\mathrm{i}(\Delta+V)$, $B=\mathrm{i}W$, with $\Delta$ the Laplace-Beltrami operator 
on a compact manifold $\Omega$ and $V:\Omega \to \mathbf{R}$ 
a smooth function, then for any measurable bounded $W:\Omega \to \mathbf{R}$, $(A,B)$ 
admits a Good Galerkin Approximation. This can be proved by considering 
$W_{\eta}:\Omega \to \mathbf{R}$ a smooth function $\eta$-close in $L^1$  norm to $W$. 
$(A,\mathrm{i}W_\eta)$ is $k$-weakly-coupled for every $k$, 
thus Proposition \ref{prop:gga} applies, and the trajectory of $(A,\mathrm{i}W_\eta)$ with control $u$  is $\|u\|_{L^1}\eta$ 
close to the trajectory of $(A,\mathrm{i}W)$ with control $u$. Conclusion 
follows by letting $\eta$ tend to zero.
\begin{question}
Does it exist a system $(A,B)$ with unbounded $B$ that satisfies Assumption \ref{ASS_1}, 
is not $k$-weakly-coupled for any $k>0$ and that can be approached, uniformly with respect of the 
$L^1$ norm of the control, by its Galerkin approximations?
\end{question}
Notice that the example of Section \ref{SEC_Anharmonic} with $\alpha\geq 3$ is a counter-example 
to the natural idea ``If $B$ is $A$-bounded, then $(A,B)$ admits Good Galerkin Approximations''.

\section{CONCLUSIONS}
 
The variety of approaches and methods developed by different authors in the last years to tackle
the difficult problem of the controllability of infinite dimensional bilinear quantum systems is essentially
the sign of the rich structure and subtle nature of control issues in
this context. It is likely that
new methods  will be necessary to answer the  many open problems in the fields.


\bibliographystyle{IEEEtran}
\bibliography{biblioECC}

\end{document}